\numberwithin{equation}{section}
\def\cb{{\mathcal B}}
\def\ch{{\mathcal H}}
\def\ck{{\mathcal K}}
\def\ga{{\mathfrak A}}
\def\a{\alpha}
\def\b{\beta}
\def\g{\gamma}
\def\eps{\varepsilon}
\def\r{\rho}
\def\s{\sigma} 
\def\t{\tau}
\def\om{\omega} \def\Om{\Omega}
\newtheorem{thm}{Theorem}[section]
\newtheorem{lem}[thm]{Lemma}
\newtheorem{prop}[thm]{Proposition}
\def\aut{\mathop{\rm Aut}}
\def\ad{\mathop{\rm ad}}
\newcommand{\nn}{\nonumber}
\def\tr{\mathop{\rm Tr}}
\begin{document}

\title[entangled ergodic theorem]
{the entangled ergodic theorem in the almost periodic case}
\author{Francesco Fidaleo}
\address{Francesco Fidaleo,
Dipartimento di Matematica,
Universit\`{a} di Roma Tor Vergata, 
Via della Ricerca Scientifica 1, Roma 00133, Italy} \email{{\tt
fidaleo@mat.uniroma2.it}}


\begin{abstract}
Let $U$ be a unitary operator acting on the Hilbert space $\ch$, and 
$\a:\{1,\dots, 2k\}\mapsto\{1,\dots, k\}$ a pair partition.
Then the ergodic average 
$$
\frac{1}{N^{k}}\sum_{n_{1},\dots,n_{k}=0}^{N-1}
U^{n_{\a(1)}}A_{1}U^{n_{\a(2)}}\cdots 
U^{n_{\a(2k-1)}}A_{2k-1}U^{n_{\a(2k)}}
$$
converges in the strong
operator topology provided $U$ is almost periodic, that is when 
$\ch$ is generated by the eigenvalues of $U$. We apply the present result to obtain the convergence of the Cesaro mean of several multiple correlations. 
\vskip 0.3cm
\noindent
{\bf Mathematics Subject Classification}: 37A30.\\
{\bf Key words}: Ergodic theorems, spectral theory, multiple correlations.
\end{abstract}

\maketitle

\section{introduction}

An entangled ergodic theorem was introduced in \cite{AHO} in 
connection with the quantum central limit theorem, and clearly 
formulated in \cite{L}.
Namely, let $U$ be 
a unitary operator on the Hilbert space $\ch$, and for $m\geq k$, 
$\a:\{1,\dots,m\}\mapsto\{1,\dots, k\}$ a partition of the 
set $\{1,\dots,m\}$ in $k$ parts.
The {\it entangled ergodic theorem} concerns 
the convergence in the strong, or merely weak (s--limit, or w--limit 
for short) operator topology, of the multiple Cesaro mean
\begin{equation}
\label{0}
\frac{1}{N^{k}}\sum_{n_{1},\dots,n_{k}=0}^{N-1}
U^{n_{\a(1)}}A_{1}U^{n_{\a(2)}}\cdots 
U^{n_{\a(m-1)}}A_{m-1}U^{n_{\a(m)}}\,,
\end{equation}
$A_{1},\dots,A_{m-1}$ being bounded operators acting on $\ch$.

Expressions like \eqref{0} naturally appear in the study of multiple 
correlations, see Section \ref{coorre} below. The simplest case is nothing but the well known mean 
ergodic theorem 
due to John von Neumann 
\begin{equation}
\label{jvn}
\mathop{\rm s\!-\!lim}_{N}\frac{1}{N}\sum_{n=0}^{N-1}U^{n}=E_{1}\,,
\end{equation}
$E_{1}$ being the selfadjoint projection onto the eigenspace of the 
invariant vectors for $U$. The entangled ergodic theorem in not yet available, and it is expected to fail
in the full generality (see
e.g.  pag. 8 of \cite{NSZ}). In addition, it is yet unknown what are 
general enough conditions under which it can be proved. In \cite{F1} 
it is shown that the entangled ergodic theorem holds true in the case 
when the $A_j$ in \eqref{0} are compact, without any condition on the unitary $U$, and in the almost periodic 
case (i.e. when $\ch$ is generated by the eigenvalues of $U$) for 
some very special pair partitions, without any condition on the 
$A_j$.  
Another interesting case arising from "quantum diagonal measures" is 
treated in \cite{F2}.

In the present note we prove that the entangled ergodic theorem holds 
true in the almost 
periodic case. Namely, the Cesaro mean in \eqref{0} converges in the 
strong operator topology for all the pair partitions $\a$, provided 
the dynamics generated by the unitary $U$ on the Hilbert space $\ch$ 
is almost periodic. We apply the present result to obtain the convergence of the Cesaro mean of several multiple correlations for $C^*$--dynamical systems such that the unitary implementing the dynamics in the GNS Hilbert space is almost periodic. 

For the sake of completeness, we report the analogous result involving the multiple correlations for $C^{*}$--dynamical systems based on compact operators.

\section{notations and basic facts}

Let $U\in\cb(\ch)$ be a unitary operator acting on the Hilbert space 
$\ch$. The unitary $U$ is said to be {\it almost periodic} if
$\ch=\ch_{\mathop{\rm ap}}^{U}$, $\ch_{\mathop{\rm ap}}^{U}$ being 
the closed subspace 
consisting of the vectors having relatively norm--compact orbit 
under $U$. It is seen in \cite{NSZ} that $U$ is almost periodic if 
and 
only if $\ch$ is generated by the eigenvectors of $U$. Denote $\s(U)$ 
and 
$\s_{\mathop{\rm pp}}(U)\subset \s(U)$ the spectrum and the pure 
point spectrum (i.e. set of all the eigenvalues of $U$) of $U$ 
respectively. Define
\begin{equation*}
\s_{\mathop{\rm pp}}^{\mathop{\rm a}}(U):=
\big\{z\in\s_{\mathop{\rm pp}}(U)\,\big|\,zw=1\,\text{for 
some}\,w\in\s_{\mathop{\rm pp}}(U)\big\}\,,
\end{equation*}
that is the "antidiagonal" part of $\s_{\mathop{\rm pp}}(U)$. A 
partition $\a:\{1,\dots,m\}\mapsto\{1,\dots, k\}$
of the set made of $m$ elements in $k$ parts is nothing but a 
surjective 
map, the parts of $\{1,\dots,m\}$ being the preimages 
$\{\a^{-1}(\{j\})\}_{j=1}^{k}$. A {\it pair partition} is nothing 
but a partition such that the preimages are made by two elements.

Consider, for each finite subset 
$F\subset\s_{\mathop{\rm pp}}^{\mathop{\rm a}}(U)$ and 
$\{A_{1},\dots,A_{2k-1}\}\subset\cb(\ch)$, the following 
operator
\begin{equation}
\label{opses}
S^{F}_{\a;A_{1},\dots,A_{2k-1}}:=
\sum_{z_{1},\dots,z_{k}\in 
F}E_{z^{\#}_{\a(1)}}A_{1}E_{z^{\#}_{\a(2)}}\cdots
E_{z^{\#}_{\a(2k-1)}}A_{2k-1}E_{z^{\#}_{\a(2k)}}
\end{equation}
together with the sesquilinear form
\begin{equation*}
s^{F}_{\a;A_{1},\dots,A_{2k-1}}(x,y):=\big\langle 
S^{F}_{\a;A_{1},\dots,A_{2k-1}}x,y\big\rangle\,,
\end{equation*}
where the pairs $z^{\#}_{\a(i)}$ are alternatively $z_{j}$ and
$\bar z_{j}$ whenever $\a(i)=j$, and $E_{z}$ is the selfadjoint 
projection on the eigenspace corresponding to the eigenvalue 
$z\in\s_{\mathop{\rm pp}}(U)$. If for example, $\a$ is the 
pair partition
$\{1,2,1,2\}$ of four elements, we write (cf. Proposition \ref{3}) for the limit in the weak 
operator topology of \eqref{opses},
$$
S_{\a;A,B,C}=
\sum_{z,w\in\s_{\mathop{\rm pp}}^{\mathop{\rm a}}(U)}E_{\bar 
z}AE_{\bar w}BE_{z}CE_{w}\,.
$$

The strong limits of the operators $S^{F}_{\a;A_{1},\dots,A_{2k-1}}$ will describe the limit of the Cesaro means \eqref{0} in the case under consideration in the present paper, see Theorem \ref{qper2} below. The reader is referred to \cite{F2} for the case when the $A_j$ are compact operators.

We report the proof of the following results for the convenience of 
the reader.
\begin{lem}
\label{1}
We have for the above sesquilinear form,
$$
\big|s^{F}_{\a;A_{1},\dots,A_{2k-1}}(x,y)\big|\leq\|x\|\|y\|
\prod_{j=1}^{2k-1}\|A_{j}\|\,,
$$
uniformly for $F$ finite subsets of $\s_{\mathop{\rm 
pp}}^{\mathop{\rm a}}(U)$.
\end{lem}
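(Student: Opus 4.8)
The plan is to reduce everything to the stronger operator--norm bound
$$
\big\|S^{F}_{\a;A_{1},\dots,A_{2k-1}}x\big\|\leq\|x\|\,\prod_{j=1}^{2k-1}\|A_{j}\|\,,\qquad x\in\ch\,,
$$
which yields the assertion at once, since $\big|s^{F}_{\a;A_{1},\dots,A_{2k-1}}(x,y)\big|=\big|\big\langle S^{F}_{\a;A_{1},\dots,A_{2k-1}}x,y\big\rangle\big|\leq\big\|S^{F}_{\a;A_{1},\dots,A_{2k-1}}x\big\|\,\|y\|$ and the right--hand side does not involve $F$. Only two elementary inputs are used: that eigenspaces of $U$ belonging to distinct eigenvalues are mutually orthogonal, so that $\sum_{z\in G}E_{z}\leq\idd$ for every finite set $G$ of eigenvalues — applied below both to $G=\{z:z\in F\}$ and to $G=\{\bar z:z\in F\}$, the latter being a set of eigenvalues precisely because $F\subset\s_{\mathop{\rm pp}}^{\mathop{\rm a}}(U)$ — and that $\|Bx\|\leq\|B\|\,\|x\|$.

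It is more efficient to prove not this statement but a more flexible one, by induction on the number of bounded operators. Given $B_{1},\dots,B_{r}\in\cb(\ch)$, consider a product
$$
\Pi=E_{(1)}B_{1}E_{(2)}B_{2}\cdots B_{r}E_{(r+1)}
$$
of $r+1$ spectral projections $E_{(i)}$ of $U$ and $r$ bounded operators, in which the eigenvalues labelling the projections split into \emph{paired} labels $v_{1},\dots,v_{l}$, each occurring in exactly two of the slots $(1),\dots,(r+1)$ with its two occurrences decorated oppositely (as $v$ and as $\bar v$), and \emph{single} labels $u_{1},\dots,u_{p}$, each occurring in exactly one slot. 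The claim is that, for every finite $F\subset\s_{\mathop{\rm pp}}^{\mathop{\rm a}}(U)$,
$$
\sum_{u_{1},\dots,u_{p}\in F}\Big\|\sum_{v_{1},\dots,v_{l}\in F}\Pi\,x\Big\|^{2}\leq\|x\|^{2}\prod_{j=1}^{r}\|B_{j}\|^{2}\,.
$$
The norm bound above is the case $r=2k-1$, $p=0$, $l=k$, $B_{j}=A_{j}$, with the pairing prescribed by $\a$.

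For $r=0$ there is a single slot, necessarily carrying a single label, and the left--hand side is $\sum_{u\in F}\|E_{u}x\|^{2}\leq\|x\|^{2}$. For $r\geq1$ one strips $E_{(1)}B_{1}$ off the left. If slot $(1)$ carries a single label $u_{1}$, then $u_{1}$ does not appear in $E_{(2)}B_{2}\cdots E_{(r+1)}$, so $E_{(1)}$ can be moved in front of all the inner sums; performing the sum of squared norms over $u_{1}$ and using $\sum_{u_{1}\in F}E_{(1)}\leq\idd$ removes it, the factor $\|B_{1}\|$ comes out, and what remains is the same expression for the shorter product $E_{(2)}B_{2}\cdots E_{(r+1)}$ (same paired labels, one fewer single label), covered by the inductive hypothesis. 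If slot $(1)$ carries a paired label, say $E_{(1)}=E_{v_{1}}$, then for fixed $u_{1},\dots,u_{p}$ the vectors $E_{v_{1}}\big(\sum_{v_{2},\dots,v_{l}}B_{1}E_{(2)}\cdots E_{(r+1)}x\big)$, $v_{1}\in F$, lie in mutually orthogonal subspaces, so $\big\|\sum_{v_{1},\dots,v_{l}}\Pi\,x\big\|^{2}=\sum_{v_{1}\in F}\big\|E_{v_{1}}\sum_{v_{2},\dots,v_{l}}B_{1}E_{(2)}\cdots E_{(r+1)}x\big\|^{2}$, which is $\leq\|B_{1}\|^{2}\sum_{v_{1}\in F}\big\|\sum_{v_{2},\dots,v_{l}}E_{(2)}B_{2}\cdots E_{(r+1)}x\big\|^{2}$; summing over $u_{1},\dots,u_{p}$ and observing that in $E_{(2)}B_{2}\cdots E_{(r+1)}$ the label $v_{1}$ now occurs only once — so that $u_{1},\dots,u_{p},v_{1}$ are single and $v_{2},\dots,v_{l}$ paired — the inductive hypothesis again applies. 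In both cases the step closes with the constant $\prod_{j}\|B_{j}\|^{2}$.

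The one step that needs care, and the reason for setting the induction up through this enlarged class rather than through the operators $S^{F}$ directly, is the paired case: deleting the left endpoint of a pair turns it into a single label, so the bookkeeping of which eigenvalues are summed inside the norm and which are summed in square outside has to be chosen stable under that operation. Everything else — orthogonality of eigenspaces, $\sum_{z\in G}E_{z}\leq\idd$, and submultiplicativity of the operator norm — is routine.
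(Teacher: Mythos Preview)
Your argument is correct. The reduction to the operator--norm bound $\|S^{F}_{\a;A_{1},\dots,A_{2k-1}}x\|\leq\|x\|\prod_{j}\|A_{j}\|$ and the inductive scheme that strips projections from the left are sound in both cases you distinguish; the key identity $\big\|\sum_{j}P_{j}\xi_{j}\big\|^{2}=\sum_{j}\|P_{j}\xi_{j}\|^{2}$ for mutually orthogonal projections, together with Bessel's inequality and $\|B\xi\|\leq\|B\|\,\|\xi\|$, are exactly the ingredients the paper invokes.

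The paper's own proof is only a sketch (``repeated application'' of orthogonality plus Schwarz and Bessel, with a reference to \cite{F1} for a worked example), so your proof is in the same spirit but strictly more explicit. The point worth highlighting is your strengthening of the inductive hypothesis: by allowing both \emph{paired} and \emph{single} labels and inducting on the number $r$ of interposed operators rather than on the number $k$ of pairs, you make the induction close even for crossing partitions --- when the left endpoint of a pair is peeled off, its partner survives as a single label and the hypothesis still applies. The paper itself remarks in its appendix that ``a proof \dots\ based on the induction principle works well only for non crossing partitions,'' presumably having in mind induction on $k$ within the class of pure pair--products; your enlarged class is precisely the device that removes this obstruction. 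So while the underlying estimates coincide with what the paper intends, your formulation of the induction is the sharper one.
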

\begin{proof}
The proof follows by the repeated application of
$$
\big\|\sum_{j\in J}P_j\xi_j\big\|^2=\sum_{j\in J}\|P_j\xi_j\|^2\,,
$$
by taking into account the Schwarz and Bessel 
inequalities. Here, $\{P_j\}_{j\in J}$ is any orthogonal set of 
selfadjoint projections acting on a Hilbert space $\ch$, and 
$\{\xi_j\}_{j\in J}\subset\ch$. The reader is referred to \cite{F1} to see how the proof works in a pivotal case.
\end{proof}
\begin{lem}
\label{2}
The net $\big\{\sum_{z\in F}E_{\bar z}AE_{z}\,\big|\,F\,\text{finite 
subset of}\,\s_{\mathop{\rm pp}}^{\mathop{\rm a}}(U)\big\}$ 
converges in the strong operator 
topology.
\end{lem}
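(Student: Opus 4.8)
The plan is to view the family as a net indexed by the finite subsets $F\subset\s_{\mathop{\rm pp}}^{\mathop{\rm a}}(U)$ directed by inclusion, and to show that it is pointwise Cauchy on $\ch$; completeness of $\ch$ then yields a strong limit, and a uniform norm bound makes this limit a bounded operator. Write $S^{F}_{A}:=\sum_{z\in F}E_{\bar z}AE_{z}$ and fix $x\in\ch$. The starting point is exactly the orthogonality identity used in the proof of Lemma \ref{1}: if $z_{1}\neq z_{2}$ lie in $\s_{\mathop{\rm pp}}^{\mathop{\rm a}}(U)$ then $\bar z_{1}\neq\bar z_{2}$, so the ranges of $E_{\bar z_{1}}$ and $E_{\bar z_{2}}$ are mutually orthogonal eigenspaces of $U$; hence for any finite $G\subset\s_{\mathop{\rm pp}}^{\mathop{\rm a}}(U)$,
\begin{equation*}
\Big\|\sum_{z\in G}E_{\bar z}AE_{z}x\Big\|^{2}=\sum_{z\in G}\big\|E_{\bar z}AE_{z}x\big\|^{2}\leq\|A\|^{2}\sum_{z\in G}\big\|E_{z}x\big\|^{2}\,.
\end{equation*}
Taking $G=F$ this reproves the uniform bound $\|S^{F}_{A}\|\leq\|A\|$ (the case $k=1$ of Lemma \ref{1}); taking instead $G=F_{1}\triangle F_{2}$, the symmetric difference, over which the $E_{\bar z}$ still form an orthogonal family, it controls the increments of the net.

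Next I would invoke Bessel's inequality for the mutually orthogonal family $\{E_{z}\}_{z\in\s_{\mathop{\rm pp}}^{\mathop{\rm a}}(U)}$: the unordered sum $\sum_{z}\|E_{z}x\|^{2}$ is dominated by $\|x\|^{2}$, hence finite, with only countably many nonzero terms. So given $\eps>0$ there is a finite $F_{0}$ with $\sum_{z\notin F_{0}}\|E_{z}x\|^{2}<\eps^{2}/(1+\|A\|^{2})$, and whenever $F_{1},F_{2}\supseteq F_{0}$ one has $F_{1}\triangle F_{2}\subset\s_{\mathop{\rm pp}}^{\mathop{\rm a}}(U)\setminus F_{0}$, so the displayed estimate (with $G=F_{1}\triangle F_{2}$, using that the terms of $S^{F_{1}}_{A}x-S^{F_{2}}_{A}x$ indexed by $F_{1}\cap F_{2}$ cancel) gives $\|S^{F_{1}}_{A}x-S^{F_{2}}_{A}x\|<\eps$. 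Thus $(S^{F}_{A}x)_{F}$ is a Cauchy net and converges to some $S_{A}x\in\ch$; the map $x\mapsto S_{A}x$ is plainly linear and satisfies $\|S_{A}x\|\leq\|A\|\|x\|$, so $S_{A}\in\cb(\ch)$ and $S^{F}_{A}\to S_{A}$ in the strong operator topology.

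I do not expect a substantial obstacle here: the whole content is the interplay between the \emph{left} orthogonality of the projections $E_{\bar z}$, which converts the norm square of a partial sum into a sum of norm squares, and the square--summability of $\{\|E_{z}x\|\}_{z}$ on the \emph{right}, which is just Bessel's inequality. The two mild points worth stressing are that this argument produces convergence in the strong, not merely weak, operator topology directly, with no density reduction, and that separability of $\ch$ is nowhere needed, since for each fixed $x$ at most countably many of the $E_{z}x$ are nonzero.
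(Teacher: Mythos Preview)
Your argument is correct and is essentially the paper's own proof: both use the orthogonality of the $E_{\bar z}$ to turn the norm square of a partial sum into a sum of norm squares, bound each term by $\|A\|^{2}\|E_{z}x\|^{2}$, and then invoke Bessel to get the Cauchy property. The only cosmetic difference is that the paper splits the increment via the triangle inequality into the two pieces over $F\setminus G$ and $G\setminus F$, whereas you handle the full symmetric difference at once (which is legitimate precisely because of the same orthogonality).
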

\begin{proof}
\begin{align*}
&\big\|\sum_{z\in F}E_{\bar z}AE_{z}x-\sum_{z\in G}E_{\bar 
z}AE_{z}x\big\|\\
\leq&\big\|\sum_{z\in F\backslash G}E_{\bar z}AE_{z}x\big\|
+\big\|\sum_{z\in G\backslash F}E_{\bar z}AE_{z}x\big\|\,.
\end{align*}
By taking into account Lemma \ref{1}, it is enough to 
prove that for
$\eps>0$, there exists a finite set $G_{\eps}$, such that
${\displaystyle\big\|\sum_{z\in H}E_{\bar 
z}AE_{z}x\big\|<\frac{\eps}{3}}$ whenever 
$H\subset G_{\eps}^{c}$. But,
$$
\big\|\sum_{z\in H}E_{\bar z}AE_{z}x\big\|^{2}
=\sum_{z\in H}\big\|E_{\bar z}AE_{z}x\big\|^{2}
\leq\|A\|^{2}\sum_{z\in H}\big\|E_{z}x\big\|^{2}\,.
$$
The proof follows as the last sum is convergent. 
\end{proof}
\begin{prop}
\label{3}
For each finite set $\{A_{1},\dots,A_{2k-1}\}\subset\cb(\ch)$, the 
net 
$\big\{S^{F}_{\a;A_{1},\dots,A_{2k-1}}\,\big|\,F\,\text{finite 
subset of}\,\s_{\mathop{\rm pp}}^{\mathop{\rm a}}(U)\big\}$ converges 
in the strong operator 
topology to a element in $\cb(\ch)$ denoted by  
$S_{\a;A_{1},\dots,A_{2k-1}}$.
\end{prop}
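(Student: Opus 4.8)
The plan is to establish strong convergence of the net $\{S^{F}_{\a;A_{1},\dots,A_{2k-1}}\}$ by an induction on $k$, using Lemma \ref{2} as the base case and the uniform boundedness from Lemma \ref{1} to pass to the limit in each factor separately. First I would fix $x\in\ch$ and, for finite sets $F\subset G\subset\s_{\mathop{\rm pp}}^{\mathop{\rm a}}(U)$, write the difference $S^{G}_{\a;\dots}x-S^{F}_{\a;\dots}x$ as a sum over those $k$-tuples $(z_{1},\dots,z_{k})\in G^{k}$ having at least one coordinate in $G\setminus F$. The key point is that each summand is a product of projections and bounded operators of the shape $E_{z^{\#}_{\a(1)}}A_{1}\cdots A_{2k-1}E_{z^{\#}_{\a(2k)}}$, so one can group the sum according to which index $j\in\{1,\dots,k\}$ realizes the ``escape'' to $G\setminus F$, and telescope: it suffices to show that for each $j$, the partial operator obtained by summing $z_{j}$ over an arbitrary tail $H\subset G_{\eps}^{c}$ (with the other $z_{i}$ ranging over all of $\s_{\mathop{\rm pp}}^{\mathop{\rm a}}(U)$) has small norm when applied to $x$.

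The core estimate proceeds exactly as in Lemma \ref{2}. Since the two occurrences of the index $j$ in the word correspond to the orthogonal projections $E_{z_{j}}$ and $E_{\bar z_{j}}$ sitting at positions $\a^{-1}(\{j\})$, for fixed values of the remaining $z_{i}$ the vectors $\{E_{\bar z_{j}}(\cdots)E_{z_{j}}(\cdots)x\}_{z_{j}\in H}$ are mutually orthogonal (their ranges lie in the pairwise orthogonal eigenspaces $\ch_{\bar z_{j}}$, for distinct $z_{j}$). Hence the squared norm of the $z_{j}$-sum equals the sum of squared norms, and by Lemma \ref{1} applied to the two sub-words flanking position $j$ — together with Bessel's inequality for the innermost factor $\sum_{z_j}\|E_{z_{j}}(\text{tail word})x\|^{2}$ — this is bounded by $\big(\prod\|A_{i}\|\big)^{2}\sum_{z_{j}\in H}\|E_{z_{j}}\eta\|^{2}$ for a suitable vector $\eta$ depending on the frozen indices, which is itself $\leq\big(\prod\|A_{i}\|\big)^{2}\|x\|^{2}$. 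Summing over the (finitely many, bounded) choices of the remaining indices via Lemma \ref{1} once more, one gets that this piece is dominated by a fixed constant times $\big(\sum_{z_{j}\in H}\|E_{z_{j}}x\|^{2}\big)^{1/2}$ up to harmless factors; since $\sum_{z}\|E_{z}x\|^{2}\leq\|x\|^{2}<\infty$, the tail can be made smaller than $\eps/k$ by choosing $G_{\eps}$ large. This is the Cauchy criterion for the net, so it converges strongly; boundedness of the limit is immediate from Lemma \ref{1}.

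The step I expect to be the main obstacle is the bookkeeping in the core estimate: one must be careful that when a single index $j$ is summed over a tail $H$ while the other $k-1$ indices still range over the whole (possibly infinite) set $\s_{\mathop{\rm pp}}^{\mathop{\rm a}}(U)$, the intermediate operator is still well-defined and uniformly bounded — this is precisely what Lemma \ref{1} delivers, but matching the pair partition $\a$ to the correct placement of $E_{z_{j}}$ versus $E_{\bar z_{j}}$, and isolating the right ``orthogonal family'' of vectors on which to run the Pythagoras identity, requires handling the general position of the two preimages $\a^{-1}(\{j\})$ inside the word rather than the special cases treated in \cite{F1}. Once the single-index tail estimate is in place, the reduction of the full net difference to $k$ such estimates via a telescoping/union-bound argument is routine.
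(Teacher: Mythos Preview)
Your telescoping/Cauchy strategy has a genuine gap in the ``core estimate'', precisely at the point you flag as the main obstacle. The claimed bound ``dominated by a fixed constant times $\big(\sum_{z_{j}\in H}\|E_{z_{j}}x\|^{2}\big)^{1/2}$'' is only available when $j=\a(2k)$, i.e.\ when the second occurrence of the index $j$ sits at the rightmost position of the word. For any other $j$ the projection $E_{z_{j}}$ is not applied to $x$ but to a vector $\eta$ that still depends on the indices appearing to its right. Take the crossing partition $\a=(1,2,1,2)$ and $j=1$: after Pythagoras on the outermost $E_{\bar z_{1}}$ and the obvious bounds one is left with
\[
\sum_{z_{2}\in G}\ \sum_{z_{1}\in H}\big\|E_{z_{1}}CE_{z_{2}}x\big\|^{2}\,.
\]
For each fixed $z_{2}$ the inner sum is a tail of a convergent series and is small, but the smallness is \emph{not uniform} in $z_{2}$; the only uniform control is $\sum_{z_{1}\in H}\|E_{z_{1}}CE_{z_{2}}x\|^{2}\le\|C\|^{2}\|E_{z_{2}}x\|^{2}$, and summing this over $z_{2}$ gives $\|C\|^{2}\|x\|^{2}$ with no dependence on $H$ whatsoever. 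Invoking Lemma~\ref{1} ``once more'' cannot recover the lost $H$-dependence, since Lemma~\ref{1} only yields the uniform bound $\prod\|A_{i}\|\cdot\|x\|$. So the Cauchy criterion is not established for the pieces with $j\neq\a(2k)$.

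The paper's proof avoids exactly this difficulty by first passing to eigenvectors. Since $S^{F}_{\a}=ES^{F}_{\a}E$ and the net is uniformly bounded by Lemma~\ref{1}, it suffices to prove strong convergence on eigenvectors $x$ with $Ux=z_{0}x$. On such $x$ the rightmost projection $E_{z_{\a(2k)}}$ kills all terms except $z_{\a(2k)}=z_{0}$, so one entire index disappears from the sum; absorbing the surviving $E_{\bar z_{0}}$ into the adjacent $A$ yields $S^{F}_{\a_{\b};A_{1},\dots,A_{k_{\b}-1}E_{\bar z_{0}}A_{k_{\b}},\dots}A_{2k+1}x$ for a pair partition on $2k$ letters, and the induction on $k$ (with Lemma~\ref{2} as base case) closes. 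Your induction can be repaired along the same lines: instead of telescoping over all $j$, collapse the index $j=\a(2k)$ by restricting to eigenvectors, and then invoke the inductive hypothesis on the shorter word.
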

\begin{proof}
As for any finite $F\subset\s_{\mathop{\rm pp}}^{\mathop{\rm a}}(U)$, 
$$
S^{F}_{\a;A_{1},\dots,A_{2k-1}}=ES^{F}_{\a;A_{1},\dots,A_{2k-1}}E\,,
$$
$E$ being the selfadjoint projection onto the almost periodic subspace of $U$, we 
suppose without loss of generality, that 
$x\in\ch$ is an eigenvector of $U$ with eigenvalue $z_{0}$.
The proof is by induction on $k$. By Lemma \ref{2}, it is enough to 
show that the assertion holds for the pair partition 
$\b:\{1,\dots, 2k+2\}\mapsto\{1,\dots, k+1\}$, provided it is true 
for any pair partition $\a:\{1,\dots, 2k\}\mapsto\{1,\dots, k\}$. 
Let 
$k_{\b}\in\{1,\dots, 2k+2\}$ be the first element of the pair 
$\b^{-1}\big(\{k+1\}\big)$, and $\a_{\b}$ the pair partition of
$\{1,\dots, 2k\}$ obtained 
by deleting $\b^{-1}\big(\{k+1\}\big)$ from $\{1,\dots, 2k+2\}$, and 
$k+1$ from $\{1,\dots, k+1\}$. We obtain
$$
S^{F}_{\b;A_{1},\dots,A_{2k+1}}x
=S^{F}_{\a_{\b};A_{1},\dots,A_{k_{\b}-1}
E_{\bar z_{0}}A_{k_{\b}},\dots,A_{2k}}A_{2k+1}x\,,
$$
provided $\bar z_{0}\in F$.  We get
$$
\lim_{F\uparrow\s_{\mathop{\rm pp}}^{\mathop{\rm a}}(U)}
S^{F}_{\b;A_{1},\dots,A_{2k+1}}x=
S_{\a_{\b};A_{1},\dots,A_{k_{\b}-1}
E_{\bar z_{0}}A_{k_{\b}},\dots,A_{2k}}A_{2k+1}x\,,
$$
$S_{\a;A_{1},\dots,A_{2k-1}}$ being the limit in the strong operator 
topology of $S^{F}_{\a;A_{1},\dots,A_{2k-1}}$ which exists by the inductive 
hypothesis.
\end{proof}

We symbolically write
\begin{align}
\label{symb}
&S_{\a;A_{1},\dots,A_{2k-1}}:=
\mathop{\rm s\!-\!lim}_{F\uparrow\s_{\mathop{\rm pp}}^{\mathop{\rm 
a}}(U)}
S^{F}_{\a;A_{1},\dots,A_{2k-1}}\\
=&\sum_{z_{1},\dots,z_{k}\in\s_{\mathop{\rm pp}}^{\mathop{\rm a}}(U)}
E_{z^{\#}_{\a(1)}}A_{1}E_{z^{\#}_{\a(2)}}\cdots
E_{z^{\#}_{\a(2k-1)}}A_{2k-1}E_{z^{\#}_{\a(2k)}}\,,\nn
\end{align}
where in \eqref{symb} the pairs $z^{\#}_{\a(i)}$ are alternatively 
$z_{j}$ and
$\bar z_{j}$ whenever $\a(i)=j$ as in \eqref{opses}. By Lemma 
\ref{1}, we get
\begin{equation}
\label{symbbb}
\|S_{\a;A_{1},\dots,A_{2k-1}}\|\leq\prod_{j=1}^{2k-1}\|A_{j}\|\,.
\end{equation}

\section{the entangled ergodic theorem in the almost periodic case}

In the present section we prove the entangled ergodic theorem for the 
almost 
periodic situation. In this way, we improve the results 
in Section 3 of \cite{F1} where only very special pair partitions
were considered. 
We suppose 
that $\ch$ is generated by 
the eigenvectors of $U$ if it is not otherwise specified. 

The proof of the following result relies upon 
the mean ergodic theorem \eqref{jvn}, by 
showing that, step by step, one can reduce the matter to the dense 
subspace algebraically generated by the eigenvectors of $U$. 

We start by pointing out some preliminary facts on the 
pair partition 
$\a:\{1,2.\dots,2k\}\mapsto\{1,2.\dots,k\}$ used in the proof. We can 
put
$$
\{1,2.\dots,2k\}=\{i_1,i_2.\dots,i_k\}\bigcup\{j_k,\dots,j_2,j_1\}
$$
with $j_k<j_{k-1}<\cdots<j_2<j_1=2k$, $\a^{-1}(\{1\})=\{i_1,2k\}$, 
the order of the set $\{i_1,i_2.\dots,i_k\}$ is that determined by 
$\a$, $i_m$ is the greatest element of $\{i_1,i_2.\dots,i_k\}$ 
(perhaps possibly coinciding with $i_1$), and finally $j_h$ is the 
first element after $i_m$ (i.e. $i_m+1=j_h$. Namely, 
"$\bigcup$" stands for disjoint union, and 
$\a^{-1}(\{l\})=\{i_l,j_l\}$ with $i_l<j_l$, $l=1,2,\dots,k$.

\begin{thm}
\label{qper2}    
Let $U$ be an almost periodic unitary 
operator acting on the Hilbert space $\ch$. Then 
for each pair partition $\a:\{1,\dots,2k\}\mapsto\{1,\dots,k\}$,
and $A_{1},\dots,A_{2k-1}\in\cb(\ch)$,
\begin{align*}
\mathop{\rm 
s\!-\!lim}_{N\to+\infty}&\bigg\{\frac{1}{N^{k}}\sum_{n_{1},\dots,n_{k}=0}^{N-1}
U^{n_{\a(1)}}A_{1}U^{n_{\a(2)}}\cdots 
U^{n_{\a(2k-1)}}A_{2k-1}U^{n_{\a(2k)}}\bigg\}\\
=&S_{\a;A_{1},\dots,A_{2k-1}}\,.
\end{align*} 
\end{thm}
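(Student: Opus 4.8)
The plan is to prove the convergence by induction on $k$, reducing at each step to the mean ergodic theorem \eqref{jvn} after showing that it suffices to test the Cesaro mean against eigenvectors of $U$. Since every $S^F_{\a;A_1,\dots,A_{2k-1}}$ equals $E\,S^F_{\a;A_1,\dots,A_{2k-1}}E$ with $E$ the projection onto $\ch^U_{\mathop{\rm ap}}=\ch$, and since the same cut-off applies to the Cesaro mean (as all the $U^{n}$ preserve each eigenspace), it is enough to prove the strong convergence when applied to a vector $x$ that is an eigenvector of $U$, say $Ux=z_0x$.

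First I would handle the base case $k=1$. For a pair partition of $\{1,2\}$ the Cesaro mean is $\frac1N\sum_{n=0}^{N-1}U^nA_1U^{n}$ (up to obvious relabelling); applying it to an eigenvector $x$ with $Ux=z_0x$ gives $z_0^{?}\frac1N\sum_{n=0}^{N-1}U^nA_1x$ up to a scalar of modulus one, and by \eqref{jvn} this converges strongly. Decomposing $A_1x$ over the eigenspaces of $U$, the only surviving term in the limit corresponds to the eigenvalue $\bar z_0$, which is exactly $E_{\bar z_0}A_1x=\big(\sum_{z\in\s^{\mathop{a}}_{\mathop{pp}}(U)}E_{\bar z}A_1E_z\big)x$, matching $S_{\a;A_1}x$ by Lemma \ref{2}. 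This is the prototype of the whole argument.

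For the inductive step I would isolate the rightmost block of the pair partition, i.e. the pair $\a^{-1}(\{l\})=\{i_l,j_l=2k\}$ containing the last index $2k$; call the associated summation variable $n_l$. Split the $k$-fold sum as the sum over $n_l$ of the $(k-1)$-fold sum over the remaining variables. Applying the operator to the eigenvector $x$, the rightmost factor $U^{n_{\a(2k)}}=U^{n_l}$ acts as the scalar $z_0^{n_l}$, so we can pull it out and absorb the factor $z_0^{n_l}$ together with $U^{n_l}$ at position $i_l$ (if $i_l<2k$) to rewrite $U^{n_l}\cdots z_0^{n_l}$, and then use $\frac1N\sum_{n_l=0}^{N-1}z_0^{n_l}U^{n_l}\to E_{\bar z_0}$ strongly by the mean ergodic theorem (applied to the unitary $\bar z_0 U$). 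The crucial point is that the remaining $(k-1)$-fold Cesaro mean, after inserting the limiting projection $E_{\bar z_0}$ at the appropriate slot, is exactly the Cesaro mean attached to the pair partition $\a_\b$ obtained by deleting the pair $\{i_l,2k\}$, with the operator $A_{i_l}$ replaced by $E_{\bar z_0}A_{i_l}$ (or $A_{i_l}E_{\bar z_0}$ according to side); this is the reduction that makes the induction hypothesis applicable, and it matches exactly the recursive structure used in Proposition \ref{3} to define $S_{\a;A_1,\dots,A_{2k-1}}$.

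The main obstacle is justifying the \emph{interchange of the iterated limit}: one must pass from ``$\lim_{N}$ of the $n_l$-average composed with the $N$-average of the rest'' to ``the $S$-operator applied to $x$'', controlling the error uniformly. Concretely one writes the difference between the $N$-th mean applied to $x$ and $S_{\a;\dots}x$ as a sum of two pieces: (i) replacing the inner $n_l$-average by its strong limit $E_{\bar z_0}$, which is fine for fixed vectors but here the inner average is applied to an $N$-dependent family of vectors $\frac1{N^{k-1}}\sum(\cdots)U^{n_{\a(i_l+1)}}\cdots x$, so one needs the uniform bound \eqref{symbbb}/Lemma \ref{1} plus the fact that these vectors live in a fixed finitely-generated (hence relatively compact) set to upgrade pointwise to uniform convergence; and (ii) applying the induction hypothesis to the resulting $(k-1)$-fold mean. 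I would manage (i) by first treating the case where all $A_j$ are replaced by their compressions to a large finite set of eigenspaces (using Lemma \ref{1} to bound the tail uniformly in $N$), where everything becomes a finite sum and the limits trivially commute, and then removing the cut-off by an $\eps/3$ argument exactly as in the proof of Lemma \ref{2}. This localization-plus-$\eps/3$ scheme is the technical heart of the proof; once it is in place, the identification of the limit with $S_{\a;A_1,\dots,A_{2k-1}}$ is immediate from \eqref{symb} and the recursive construction in Proposition \ref{3}.
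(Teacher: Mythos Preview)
Your high--level strategy (reduce to eigenvectors of $U$ and feed the mean ergodic theorem \eqref{jvn} into the expression one pair at a time) is exactly the paper's. However, the paper does \emph{not} proceed by induction on $k$: the Appendix explicitly warns that ``a proof of our main theorem \dots\ based on the induction principle works well only for non crossing partitions'' and that any inductive attempt ``produces essentially the same complexity as the proof presented in this paper''. Your sketch runs into precisely the obstacle the author has in mind.

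The gap is in your handling of step (i). Replacing every $A_j$ by $P_FA_jP_F$ and then removing the cut--off ``by an $\eps/3$ argument exactly as in the proof of Lemma~\ref{2}'' does not work as stated. Telescoping the difference between the Cesaro mean with the $A_j$'s and the one with the $P_FA_jP_F$'s, the $j$--th error term contains $(I-P_F)A_j$ applied to vectors of the form $U^{n_{\a(j+1)}}P_FA_{j+1}P_F\cdots x$; these vectors depend on $F$ (through the already compressed operators to the right), so you cannot choose a single $F$ making the error small uniformly in $N$. Your parenthetical justification ``finitely--generated (hence relatively compact)'' is not correct: before compression the relevant orbit set is not finitely generated, and after compression it depends on $F$. (One \emph{can} rescue the argument by invoking the genuinely nontrivial fact that, $U$ being almost periodic, iterated orbits $\{U^{m_1}B_1U^{m_2}B_2\cdots x\}$ are relatively norm--compact; but that is a different argument, not the one you wrote, and not the paper's either.)

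The paper's remedy is to approximate \emph{vectors} rather than operators, and to do so recursively and dependently: first choose a finite $I_\eps$ with $\big\|x-\sum_{\eta_1\in I_\eps}E_{\eta_1}x\big\|<\eps$; then for each $\eta_1\in I_\eps$ choose $I_\eps(\eta_1)$ with $\big\|A_{2k-1}E_{\eta_1}x-\sum_{\eta_2\in I_\eps(\eta_1)}E_{\eta_2}A_{2k-1}E_{\eta_1}x\big\|<\eps/|I_\eps|$; and so on through $k$ levels. Only after this tree of approximations is fixed does one choose $N_\eps$ so that \eqref{jvn} makes each of the (finitely many) remaining ergodic averages $\eps$--close to its limit. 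A single chain of triangle inequalities then gives the bound $\leq 3k\eps$. This nested, dependent choice of finite sets is exactly what your single--$F$ compression misses, and it is why the author abandons the inductive formulation in favour of a direct estimate.
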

\begin{proof}
We suppose without loss of generality (cf. \eqref{symbbb}), that 
$\|A_i\|\leq1$, $i=1,\dots,2k-1$. 
Fix $\eps>0$, and choose recursively the following sets.
Let $I_{\eps}$ be such that 
$$
\bigg\|x-\sum_{\eta_1\in I_{\eps}}E_{\eta_1}x\bigg\|<\eps\,.
$$
For each $\eta_1\in I_{\eps}$, let $I_{\eps}(\eta_1)$ be such that 
$$
\bigg\|A_{2k-1}E_{\eta_1}x-\sum_{\eta_2\in 
I_{\eps}(\eta_1)}E_{\eta_2}A_{2k-1}E_{\eta_1}x\bigg\|<\frac{\eps}{|I_{\eps}|}\,,
$$
provided $i_1<j_2$.\footnote{We reduce the matter to this case as the partitions for which  
$\a(2k-1)=\a(2k)$ can be treated by taking into account that   
the product is jointly continuous in the strong operator topology 
when restricted to bounded parts.}
Finally, for each $\eta_1\in I_{\eps}$, $\eta_2\in 
I_{\eps}(\eta_1)$,..., $\eta_{k-1}\in 
I_{\eps}(\eta_1,\eta_2,\dots,\eta_{k-2})$, let 
$I_{\eps}(\eta_1,\eta_2,\dots,\eta_{k-2},\eta_{k-1})$ be such that
\begin{align*}
&\bigg\|A_{j_k+1}E_{\eta^\#_{\a(j_k+1)}}\cdots 
E_{\eta_2}A_{2k-1}E_{\eta_1}x\\
-&\sum_{\eta_k\in 
I_{\eps}(\eta_1,\eta_2,\dots,\eta_{k-2},\eta_{k-1})}E_{\eta_k}A_{j_k}
A_{j_k+1}E_{\eta^\#_{\a(j_k+1)}}\cdots 
E_{\eta_2}A_{2k-1}E_{\eta_1}x\bigg\|\\
<&\frac{\eps}
{\sum_{\eta_1\in I_{\eps}}\sum_{\eta_2\in I_{\eps}(\eta_1)}\cdots
\sum_{\eta_{k-1}\in I_{\eps}(\eta_1,\eta_2,\dots,\eta_{k-2})}
|I_{\eps}(\eta_1,\eta_2,\dots,\eta_{k-2},\eta_{k-1})|}\,.
\end{align*}
By taking into account \eqref{jvn}, choose $N_{\eps}$ such that 
\begin{align*}
&\bigg\|\bigg(\frac{1}{N}\sum_{n=0}^{N-1}(\eta_m 
U)^{n}-E_{\bar\eta_m}\bigg)
A_{i_m}E_{\eta_h}\cdots
E_{\eta_2}A_{2k-1}E_{\eta_1}x\bigg\|\\
<&\frac{\eps}
{\sum_{\eta_1\in I_{\eps}}\sum_{\eta_2\in I_{\eps}(\eta_1)}\cdots
\sum_{\eta_{h-1}\in I_{\eps}(\eta_1,\eta_2,\dots,\eta_{h-2})}
|I_{\eps}(\eta_1,\eta_2,\dots,\eta_{h-2},\eta_{h-1})|}\,,
\end{align*}
and after $k-1$ steps,
\begin{align*}
&\bigg\|\bigg(\frac{1}{N}\sum_{n=0}^{N-1}(\eta_k 
U)^{n}-E_{\bar\eta_k}\bigg)
A_{1}E_{\eta^\#_{\a(2)}}\cdots
E_{\eta_2}A_{2k-1}E_{\eta_1}x\bigg\|\\
<&\frac{\eps}
{\sum_{\eta_1\in I_{\eps}}\sum_{\eta_2\in I_{\eps}(\eta_1)}\cdots
\sum_{\eta_{k-1}\in I_{\eps}(\eta_1,\eta_2,\dots,\eta_{k-2})}
|I_{\eps}(\eta_1,\eta_2,\dots,\eta_{k-2},\eta_{k-1})|}\,,
\end{align*}
whenever $N>N_{\eps}$. We then have
\begin{align*}
&\bigg\|\bigg(\frac{1}{N^{k}}\sum_{n_{1},\dots,n_{k}=0}^{N-1}
U^{n_{\a(1)}}A_{1}U^{n_{\a(2)}}\cdots 
U^{n_{\a(2k-1)}}A_{2k-1}U^{n_{\a(2k)}}
-S_{\a;A_{1},\dots,A_{2k-1}}\bigg)x\bigg\|\\
\leq&\bigg\|\bigg(\frac{1}{N^{k}}\sum_{n_{1},\dots,n_{k}=0}^{N-1}
U^{n_{\a(1)}}A_{1}U^{n_{\a(2)}}\cdots 
U^{n_{\a(2k-1)}}A_{2k-1}U^{n_{\a(2k)}}
-S_{\a;A_{1},\dots,A_{2k-1}}\bigg)\bigg\|\\
\times&\bigg\|\bigg(x-\sum_{\eta_1\in 
I_{\eps}}E_{\eta_1}x\bigg)\bigg\|
\end{align*}
\begin{align*}
+&\bigg\|\sum_{\eta_1\in 
I_{\eps}}\bigg(\frac{1}{N^{k}}\sum_{n_{1},\dots,n_{k}=0}^{N-1}
U^{n_{\a(1)}}A_{1}U^{n_{\a(2)}}\cdots 
A_{i_1-1}(\eta_1U)^{n_{1}}A_{i_1}\cdots U^{n_{\a(2k-1)}}\\
-&\sum_{z_{2},\dots,z_{k}\in\s_{\mathop{\rm pp}}^{\mathop{\rm 
a}}(U)}
E_{z^{\#}_{\a(1)}}A_{1}E_{z^{\#}_{\a(2)}}\cdots
A_{i_1-1}E_{\bar\eta_1}A_{i_1}\cdots 
E_{z^{\#}_{\a(2k-1)}}\bigg)A_{2k-1}E_{\eta_1}x\bigg\|\\
\leq&2\eps+\bigg\|\sum_{\eta_1\in 
I_{\eps}}\bigg(\frac{1}{N^{k}}\sum_{n_{1},\dots,n_{k}=0}^{N-1}
U^{n_{\a(1)}}A_{1}U^{n_{\a(2)}}\cdots 
A_{i_1-1}(\eta_1U)^{n_{1}}A_{i_1}\cdots U^{n_{\a(2k-1)}}
\end{align*}
\begin{align*}
-&\sum_{z_{2},\dots,z_{k}\in\s_{\mathop{\rm pp}}^{\mathop{\rm 
a}}(U)}
E_{z^{\#}_{\a(1)}}A_{1}E_{z^{\#}_{\a(2)}}\cdots
A_{i_1-1}E_{\bar\eta_1}A_{i_1}\cdots 
E_{z^{\#}_{\a(2k-1)}}\bigg)\bigg\|\\
\times&\bigg\|\bigg(A_{2k-1}E_{\eta_1}x-\sum_{\eta_2\in 
I_{\eps}(\eta_1)}
E_{\eta_2}A_{2k-1}E_{\eta_1}x\bigg)\bigg\|\\
+&\bigg\|\sum_{\eta_1\in I_{\eps}}\sum_{\eta_2\in I_{\eps}(\eta_1)}
\bigg(\frac{1}{N^{k}}\sum_{n_{1},\dots,n_{k}=0}^{N-1}
U^{n_{\a(1)}}A_{1}U^{n_{\a(2)}}\cdots 
A_{i_1-1}(\eta_1U)^{n_{1}}A_{i_1}\cdots U^{n_{\a(2k-1)}}
\end{align*}
\begin{align*}
-&\sum_{z_{2},\dots,z_{k}\in\s_{\mathop{\rm pp}}^{\mathop{\rm 
a}}(U)}
E_{z^{\#}_{\a(1)}}A_{1}E_{z^{\#}_{\a(2)}}\cdots
A_{i_1-1}E_{\bar\eta_1}A_{i_1}\cdots 
A_{2k-2}\bigg)E_{\eta_2}A_{2k-1}E_{\eta_1}x\bigg\|\\
&\qquad\qquad\vdots\qquad\vdots\qquad\vdots\qquad\qquad\vdots\qquad\vdots\qquad
\vdots\qquad\qquad\vdots\qquad\vdots\qquad\vdots\qquad\qquad\\
\leq&2(2k-i_m)\eps+\bigg\|\sum_{\eta_1\in I_{\eps}}\sum_{\eta_2\in 
I_{\eps}(\eta_1)}\cdots
\sum_{\eta_{h}\in 
I_{\eps}(\eta_1,\eta_2,\dots,\eta_{h-1})}\frac{1}{N^{k-1}}
\end{align*}
\begin{align*}
\times&\sum_{n_{1},\dots,n_{m-1},n_{m+1},\dots n_{k}=0}^{N-1}
U^{n_{\a(1)}}A_{1}U^{n_{\a(2)}}\cdots A_{i_m-1}
\bigg(\frac{1}{N}\sum_{n=0}^{N-1}(\eta_m U)^{n}-E_{\bar\eta_m}\bigg)\\
\times&A_{i_m}E_{\eta_h}\cdots E_{\eta_2}A_{2k-1}E_{\eta_1}x\bigg\|
+\bigg\|\sum_{\eta_1\in I_{\eps}}\sum_{\eta_2\in 
I_{\eps}(\eta_1)}\cdots
\sum_{\eta_{h}\in I_{\eps}(\eta_1,\eta_2,\dots,\eta_{h-1})}\\
\times&\bigg(\frac{1}{N^{k-1}}
\sum_{n_{1},\dots,n_{m-1},n_{m+1},\dots n_{k}=0}^{N-1}
U^{n_{\a(1)}}A_{1}U^{n_{\a(2)}}\cdots U^{n_{\a(i_m-1)}}
\end{align*}
\begin{align*}
-&\sum_{z_{h+1},\dots,z_{k}\in\s_{\mathop{\rm pp}}^{\mathop{\rm a}}(U)}
E_{z^{\#}_{\a(1)}}A_{1}E_{z^{\#}_{\a(2)}}\cdots 
E_{z^{\#}_{\a(i_m-1)}}\bigg)
A_{i_m-1}E_{\bar\eta_1}A_{i_m}\cdots 
E_{\eta_2}A_{2k-1}E_{\eta_1}x\bigg\|\\
\leq&[2(2k-i_m)+1]\eps+\bigg\|\sum_{\eta_1\in 
I_{\eps}}\sum_{\eta_2\in I_{\eps}(\eta_1)}\cdots
\sum_{\eta_{h}\in I_{\eps}(\eta_1,\eta_2,\dots,\eta_{h-1})}\\
\times&\bigg(\frac{1}{N^{k-1}}
\sum_{n_{1},\dots,n_{m-1},n_{m+1},\dots n_{k}=0}^{N-1}
U^{n_{\a(1)}}A_{1}U^{n_{\a(2)}}\cdots U^{n_{\a(i_m-1)}}
\end{align*}
\begin{align*}
-&\sum_{z_{h+1},\dots,z_{k}\in\s_{\mathop{\rm pp}}^{\mathop{\rm a}}(U)}
E_{z^{\#}_{\a(1)}}A_{1}E_{z^{\#}_{\a(2)}}\cdots 
E_{z^{\#}_{\a(i_m-1)}}\bigg)
A_{i_m-1}E_{\bar\eta_1}A_{i_m}\cdots 
E_{\eta_2}A_{2k-1}E_{\eta_1}x\bigg\|\\
&\qquad\qquad\vdots\qquad\vdots\qquad\vdots\qquad\qquad\vdots\qquad\vdots\qquad\vdots\qquad\qquad\vdots
\qquad\vdots\qquad\vdots\qquad\qquad\\
\leq&(3k-1)\eps+\bigg\|\sum_{\eta_1\in I_{\eps}}\sum_{\eta_2\in 
I_{\eps}(\eta_1)}\cdots
\sum_{\eta_{k}\in I_{\eps}(\eta_1,\eta_2,\dots,\eta_{k-1})}
\bigg(\frac{1}{N}\sum_{n=0}^{N-1}(\eta_k U)^{n}-E_{\bar\eta_k}\bigg)\\
\times&A_{1}E_{\eta^\#_{\a(2)}}\cdots
E_{\eta_2}A_{2k-1}E_{\eta_1}x\bigg\|\leq3k\eps\,.
\end{align*}
\end{proof}

\section{multiple correlations}
\label{coorre}

The study of multiple correlations is a standard matter of interest in classical and quantum ergodic theory for several application to various fields. For example they are of interest to investigate the chaotic behavior of dynamical systems. We also mention the natural applications to quantum statistical mechanics, number theory, probability. The reader is referred to \cite{Fu, NSZ} for further details (see also \cite{F2, F3} for some partial results involving multiple correlations and recurrence). The present analysis allows us to study the limit of the Cesaro mean of several multiple correlations. 

We start with a $C^*$--dynamical system $(\ga,\g,\om)$ made of a $C^*$--algebra $\ga$, an automorphism $\g$ of $\ga$, and finally a state $\om$ on $\ga$ which is invariant under $\g$. Consider the covariant GNS representation $(\pi_{\om}, \ch_{\om}, \Om, U)$ (cf. \cite{T}) associated to the 
$C^*$--dynamical system under consideration. 
\begin{thm}
\label{rree}
Under the above notations, suppose that $U$ implementing $\g$ on $\ch_{\om}$ is almost periodic. Then for each pair partition $\a:\{1,\dots, 2k\}\mapsto\{1,\dots, k\}$, we have
\begin{align*}
&\lim_N\frac{1}{N^{k}}\sum_{n_{1},\dots,n_{k}=0}^{N-1}
\om\big(A_0\g^{n_{\a(1)}}(A_{1})\\
\times&\g^{n_{\a(1)}+n_{\a(2)}}(A_2)\cdots 
\g^{(\sum_{l=1}^{2k-1}n_{\a(l)})}(A_{2k-1})
\g^{(\sum_{i=1}^{k}2n_{i})}(A_{2k})\big)\\
=&\big\langle\pi_{\om}(A_0)S_{\a;\pi_{\om}(A_1),\dots,\pi_{\om}(A_{2k-1})}\pi_{\om}(A_{2k})
\Om,\Om\big\rangle\,.
\end{align*}
\end{thm}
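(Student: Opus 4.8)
The plan is to reduce Theorem \ref{rree} to Theorem \ref{qper2} by unwinding the covariance relations of the GNS representation. Recall that the covariant GNS representation $(\pi_\om,\ch_\om,\Om,U)$ satisfies $U\pi_\om(A)U^*=\pi_\om(\g(A))$ and $U\Om=\Om$, so that $U^n\pi_\om(A)U^{-n}=\pi_\om(\g^n(A))$ for every $A\in\ga$ and $n\in\bz$. First I would rewrite a single summand of the Cesaro mean as a vector state: using $\om(B)=\langle\pi_\om(B)\Om,\Om\rangle$ and the homomorphism property of $\pi_\om$, the argument of $\om$ inside the sum becomes
\[
\pi_\om(A_0)\,U^{n_{\a(1)}}\pi_\om(A_1)U^{-n_{\a(1)}}\,U^{n_{\a(1)}+n_{\a(2)}}\pi_\om(A_2)U^{-(n_{\a(1)}+n_{\a(2)})}\cdots
\]
applied to $\Om$ and paired with $\Om$. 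The key bookkeeping step is that the telescoping of the exponents is engineered precisely so that the $U$-powers collapse into the entangled pattern $U^{n_{\a(1)}}\pi_\om(A_1)U^{n_{\a(2)}}\cdots U^{n_{\a(2k-1)}}\pi_\om(A_{2k-1})U^{n_{\a(2k)}}$: the exponent of $U$ sitting between $\pi_\om(A_{j-1})$ and $\pi_\om(A_j)$ is the difference $(\sum_{l=1}^{j}n_{\a(l)})-(\sum_{l=1}^{j-1}n_{\a(l)})=n_{\a(j)}$, and the final block $U^{(\sum_i 2n_i)}\Om=\Om$ together with the leftmost $U^{n_{\a(1)}}$ supply the outermost powers, using $U\Om=\Om$ to absorb the extra $U^{-(\cdots)}$ acting on $\Om$.

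Once this algebraic identity is in place, the summand equals $\big\langle \pi_\om(A_0)\,T_{n_1,\dots,n_k}\,\pi_\om(A_{2k})\Om,\Om\big\rangle$, where $T_{n_1,\dots,n_k}=U^{n_{\a(1)}}\pi_\om(A_1)U^{n_{\a(2)}}\cdots U^{n_{\a(2k-1)}}\pi_\om(A_{2k-1})U^{n_{\a(2k)}}$ is exactly the operator appearing in \eqref{0} with $A_j$ replaced by $\pi_\om(A_j)$. Averaging over $n_1,\dots,n_k$ and pulling the (continuous, hence commuting-with-limits) functionals $\langle\pi_\om(A_0)\,\cdot\,\pi_\om(A_{2k})\Om,\Om\rangle$ outside, the left-hand side of Theorem \ref{rree} becomes
\[
\lim_N\Big\langle \pi_\om(A_0)\Big(\tfrac{1}{N^k}\sum_{n_1,\dots,n_k=0}^{N-1}T_{n_1,\dots,n_k}\Big)\pi_\om(A_{2k})\Om,\Om\Big\rangle.
\]
Since $U$ is assumed almost periodic on $\ch_\om$, Theorem \ref{qper2} applies (with $\ch=\ch_\om$ and operators $\pi_\om(A_1),\dots,\pi_\om(A_{2k-1})\in\cb(\ch_\om)$): the bracketed average converges in the strong operator topology to $S_{\a;\pi_\om(A_1),\dots,\pi_\om(A_{2k-1})}$. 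Because strong convergence implies convergence when testing against the fixed vector $\pi_\om(A_{2k})\Om$ and then pairing with $\Om$ (boundedness of $\pi_\om(A_0)^*$), the limit equals $\big\langle\pi_\om(A_0)S_{\a;\pi_\om(A_1),\dots,\pi_\om(A_{2k-1})}\pi_\om(A_{2k})\Om,\Om\big\rangle$, which is the claimed right-hand side.

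The main obstacle I anticipate is purely notational rather than conceptual: verifying the exponent bookkeeping, i.e. that the specific nested sums $\g^{n_{\a(1)}}(A_1),\g^{n_{\a(1)}+n_{\a(2)}}(A_2),\dots,\g^{\sum_{l=1}^{2k-1}n_{\a(l)}}(A_{2k-1}),\g^{\sum_i 2n_i}(A_{2k})$ are precisely what is produced by conjugating the entangled word $T_{n_1,\dots,n_k}$ by powers of $U$ and using $U\Om=\Om$. Here one uses that $\a$ is a pair partition, so $\sum_{l=1}^{2k}n_{\a(l)}=2\sum_{i=1}^k n_i$, which is why the last exponent is $\sum_i 2n_i$ and why the final $U$-block annihilates harmlessly against $\Om$. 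After that, everything is an application of Theorem \ref{qper2} together with the elementary fact that a bounded net converging strongly converges in every vector state. No new estimates are needed beyond \eqref{symbbb} and the strong convergence already established.
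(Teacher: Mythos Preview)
Your proposal is correct and follows essentially the same route as the paper: the paper's proof consists precisely of the identity
\[
\om\big(A_0\g^{n_{\a(1)}}(A_{1})\cdots\g^{(\sum_i 2n_i)}(A_{2k})\big)
=\big\langle\pi_{\om}(A_0)U^{n_{\a(1)}}\pi_{\om}(A_{1})\cdots U^{n_{\a(2k)}}\pi_{\om}(A_{2k})\Om,\Om\big\rangle
\]
followed by a direct appeal to Theorem~\ref{qper2}. Your write-up merely spells out the telescoping of exponents and the use of $U\Om=\Om$ that the paper leaves implicit.
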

\begin{proof}
The proof directly follows from Theorem \ref{qper2}, by taking into account that
\begin{align*}
&\om\big(A_0\g^{n_{\a(1)}}(A_{1})
\g^{n_{\a(1)}+n_{\a(2)}}(A_2)\cdots 
\g^{(\sum_{l=1}^{2k-1}n_{\a(l)})}(A_{2k-1})
\g^{(\sum_{i=1}^{k}2n_{i})}(A_{2k})\big)\\
=&\big\langle\pi_{\om}(A_0)U^{n_{\a(1)}}\pi_{\om}(A_{1})U^{n_{\a(2)}}\cdots 
U^{n_{\a(2k-1)}}\pi_{\om}(A_{2k-1})U^{n_{\a(2k)}}\pi_{\om}(A_{2k})\Om,\Om\big\rangle\,.
\end{align*}
\end{proof}

For the sake of completeness, we notice that the same result holds true for $C^{*}$--dynamical systems of compact operators, without any condition on the unitary $U$ implementing the dynamics. Namely, let 
$(\ck(\ch),\g)$ a $C^{*}$--dynamical system based on the algebra of all the compact operators 
acting on the Hilbert space $\ch$. It is well known that $\g=\ad(U)$, that is it is unitarily implemented on 
$\ch$ by the adjoint action of some unitary $U$, uniquely determined up to a phase. We denote by $\g$ the adjoint action $\ad(U)\equiv\g^{**}$ on $\cb(\ch)$ as well.\footnote{It is enough to consider the double transpose 
$\g^{**}\in\aut(\cb(\ch))$, which is an automorphism of $\cb(\ch)$, and therefore inner. The previous mentioned phase factor is inessential for our computations.} 

Let $\tr$ be the canonical trace on $\cb(\ch)$, and consider a positive normalized trace class operator 
$T$
acting on $\ch$ such that $s(T)\leq E_1$, $s(T)$ and $E_1$ being the support of $T$ and the spectral projection onto the invariant vectors for $U$, respectively. It is easy to show that
\begin{equation}
\label{tinv}
UT=T=TU\,.
\end{equation}
Let $\om_T\in\cb(\ch)_*$ be the state defined as
$$
\om_T(A):=\tr(TA)\,,\quad A\in\cb(\ch)\,.
$$

Thanks to \eqref{tinv}, it is invariant under $\g$, as its restriction to $\ck(\ch)$ denoted with an abuse of notation also by $\om_T$. The following result parallels Theorem \ref{rree}.
\begin{thm}
For each pair partition $\a:\{1,\dots, 2k\}\mapsto\{1,\dots, k\}$,  $\{A_0, A_{2k}\}\subset\cb(\ch)$ and 
$\{A_1,\dots A_{2k-1}\}\subset\ck(\ch)$, we have
\begin{align}
\label{limi}
&\lim_N\frac{1}{N^{k}}\sum_{n_{1},\dots,n_{k}=0}^{N-1}
\om_T\big(A_0\g^{n_{\a(1)}}(A_{1})
\g^{n_{\a(1)}+n_{\a(2)}}(A_2)\times\cdots\nn\\ 
\times&\g^{(\sum_{l=1}^{2k-1}n_{\a(l)})}(A_{2k-1})
\g^{(\sum_{i=1}^{k}2n_{i})}(A_{2k})\big)\\
=&\om_T\big(A_0S_{\a;A_1,\dots,A_{2k-1}}A_{2k}\big)\nn\,.
\end{align}
\end{thm}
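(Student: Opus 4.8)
The plan is to reduce \eqref{limi} to Theorem \ref{qper2} by the same device already used in the proof of Theorem \ref{rree}, the only difference being that the underlying unitary $U$ need not be almost periodic, so one cannot apply Theorem \ref{qper2} directly on all of $\ch$. First I would rewrite the left-hand side of \eqref{limi} in the GNS-type form
\[
\frac{1}{N^{k}}\sum_{n_{1},\dots,n_{k}=0}^{N-1}
\tr\big(T A_0 U^{n_{\a(1)}}A_{1}U^{n_{\a(2)}}\cdots U^{n_{\a(2k-1)}}A_{2k-1}U^{n_{\a(2k)}}A_{2k}\big),
\]
using that $\g=\ad(U)$ on $\cb(\ch)$ together with the cyclicity of the trace, and observing that the telescoping of the arguments $\g^{n_{\a(1)}+\cdots}$ produces exactly the entangled word $U^{n_{\a(1)}}A_1U^{n_{\a(2)}}\cdots$, precisely as in the displayed identity inside the proof of Theorem \ref{rree}.

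Next, since $s(T)\le E_1$, we have $T=E_1 T=TE_1$, and by \eqref{tinv} the operator $T$ is supported on the invariant subspace of $U$, which is in particular contained in the almost periodic subspace $\ch^{U}_{\mathop{\rm ap}}$. Let $E$ denote the selfadjoint projection onto $\ch^{U}_{\mathop{\rm ap}}$, so that $UE=EU$, $T=ET=TE$, and $\tr(T\,\cdot\,)=\tr(T E\,\cdot\,E)$. Restricting $U$ to $\ch^{U}_{\mathop{\rm ap}}$ gives an almost periodic unitary, so Theorem \ref{qper2} applies to the word built from the compressed operators $EA_1E,\dots,EA_{2k-1}E$ (these remain compact because $A_1,\dots,A_{2k-1}\in\ck(\ch)$, and compactness is not even needed here since $\ch^U_{\mathop{\rm ap}}$ is almost periodic). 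I would therefore insert the projection $E$ next to $T$ on the far left and exploit that each remaining $U^{n_{\a(l)}}$ commutes with $E$ to slide copies of $E$ through the whole word; after absorbing $E$ into $A_0$ (on the left, where it sits against $T$) and into $A_{2k}$ (on the right, where $\Om$-analogue $T$ again forces $E$), the Cesaro average becomes, up to a bounded factor $A_0$ on the left and $A_{2k}$ on the right, the average in Theorem \ref{qper2} for the restricted dynamics. Applying that theorem yields strong convergence to $E\,S^{\,U|_{\ch_{\rm ap}}}_{\a;EA_1E,\dots,EA_{2k-1}E}\,E$, and one checks from the definition \eqref{symb} that this compressed operator coincides with $E\,S_{\a;A_1,\dots,A_{2k-1}}\,E$, because every $E_{z}$ with $z\in\s^{\mathop{\rm a}}_{\mathop{\rm pp}}(U)$ already satisfies $E_z=EE_zE$.

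Finally I would pass from strong operator convergence to convergence of the scalars in \eqref{limi}: writing the $n$-average as $B_N$, we have $B_N\to S_{\a;A_1,\dots,A_{2k-1}}$ strongly after the compression, hence $\tr(TA_0 B_N A_{2k})\to\tr(TA_0 S_{\a;A_1,\dots,A_{2k-1}}A_{2k})=\om_T\big(A_0 S_{\a;A_1,\dots,A_{2k-1}}A_{2k}\big)$, using that $T$ is trace class so that $A\mapsto\tr(TA_0 \, A\, A_{2k})=\tr\big((A_{2k}TA_0)A\big)$ is a normal (hence strongly continuous on bounded sets) functional, and that the $B_N$ are uniformly bounded by \eqref{symbbb}-type estimates. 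The main obstacle is the bookkeeping of where the projection $E$ can be legitimately inserted and commuted through the entangled word; once one observes that $UE=EU$ and $T=ET=TE$, every insertion is automatic, and the only genuinely substantive input is the almost periodic entangled ergodic theorem already established as Theorem \ref{qper2}. Compactness of $A_1,\dots,A_{2k-1}$ is, in this trace-class-state formulation, not actually required, but it is kept in the statement to parallel the setting of the compact-operator correlation result and to ensure $\om_T$ makes sense on $\ck(\ch)$ in the customary way.
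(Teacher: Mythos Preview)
Your first reduction---rewriting the correlation as
\[
\om_T\big(A_0U^{n_{\a(1)}}A_{1}U^{n_{\a(2)}}\cdots U^{n_{\a(2k-1)}}A_{2k-1}U^{n_{\a(2k)}}A_{2k}\big)
\]
via $\g=\ad(U)$ and \eqref{tinv}---is correct and is exactly what the paper does.

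The gap is in the compression step. From $T=ETE$ and cyclicity you indeed get
\[
\tr\big(T A_0\,W\,A_{2k}\big)=\tr\big(T\,E A_0\,W\,A_{2k}E\big),
\]
with $W$ the entangled word. But to replace $W$ by the word built from $EA_1E,\dots,EA_{2k-1}E$ and $U\lceil_{E\ch}$ you would need $E$ to appear on \emph{both} sides of every intermediate $A_j$. Commuting $E$ past $U^{n_{\a(l)}}$ only moves the single copy of $E$ from $T$ up to one side of the first $A_j$ it meets; since the $A_j$ neither commute with $E$ nor preserve $E\ch$, you cannot manufacture the missing $E$ on the other side. Concretely, $A_{2k}e_i$ and $A_0^*e_i$ (for $e_i$ in the range of $T\subset E_1\ch$) need not lie in $\ch^U_{\mathop{\rm ap}}$, so Theorem~\ref{qper2} gives you no information about $W$ applied to such vectors. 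The sentence ``slide copies of $E$ through the whole word'' is precisely the step that fails.

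The paper avoids this by not reducing to the almost periodic case at all. It invokes Theorem~2.6 of \cite{F1}, the entangled ergodic theorem for \emph{compact} $A_1,\dots,A_{2k-1}$, which yields strong convergence of the Cesaro mean on the full space $\ch$ without any hypothesis on $U$; one then approximates $T$ by finite rank operators to pass to the trace. So compactness of $A_1,\dots,A_{2k-1}$ is not cosmetic---it is the mechanism that replaces almost periodicity, and your final remark that it ``is not actually required'' is incorrect in this setting.
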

\begin{proof}
By \eqref{tinv}, we get
\begin{align*}
&\om_T\big(A_0\g^{n_{\a(1)}}(A_{1})
\g^{n_{\a(1)}+n_{\a(2)}}(A_2)\cdots 
\g^{(\sum_{l=1}^{2k-1}n_{\a(l)})}(A_{2k-1})
\g^{(\sum_{i=1}^{k}2n_{i})}(A_{2k})\big)\\
=&\om_T\big(A_0U^{n_{\a(1)}}A_{1}U^{n_{\a(2)}}\cdots 
U^{n_{\a(2k-1)}}A_{2k-1}U^{n_{\a(2k)}}A_{2k}\big)\,.
\end{align*}

The proof directly follows from Theorem 2.6 of \cite{F1} by approximating the trace class operator $T$ by a finite rank one.
\end{proof}

We conclude by noticing that Theorem 2.6 of \cite{F1} allows us to treat all the multiple correlations arising from any general partition of any set of $m$ points in $k$ parts, for dynamical systems based on the compact operators. In the case of non pair partitions, it is not immediate to provide a general formula for the limit in \eqref{limi}.

\section{appendix}

Unfortunately, a proof of our main theorem (or of the estimation in Lemma \ref{1}) based on the induction principle works well only for non crossing partitions.\footnote{Among the set of pair partitions of four elements, $\{1,2,2,1\}$ and $\{1,1,2,2\}$ are non crossing, whereas the remaining one $\{1,2,1,2\}$ is crossing. The reader is referred to \cite{AHO} for the abstract definition of crossing partitions.}
Due to entanglement and to the fact that the mean ergodic theorem \eqref{jvn} holds true only in the strong operator topology, any attempt to provide any kind of induction proof of Theorem \ref{qper2} produces essentially the same complexity as the proof presented in this paper. While keeping the original proof, to show how the last is working, we report the particular case of the entangled partition $\a=\{1,2,1,3,2,3\}$. 

Fix $\eps>0$, and suppose that $A,B,C,D,F\in\cb(\ch)$ have norm  
one. Let $I_{\eps}$ be such that 
$$
\bigg\|x-\sum_{\s\in I_{\eps}}E_{\s}x\bigg\|<\eps\,.
$$
For each $\s\in I_{\eps}$, let $I_{\eps}(\s)$ be such that 
$$
\bigg\|FE_{\s}x-\sum_{\t\in 
I_{\eps}(\s)}E_{\t}FE_{\s}x\bigg\|<\frac{\eps}{|I_{\eps}|}\,.
$$
Finally, for each $\s\in I_{\eps}$, $\t\in I_{\eps}(\s)$, let $I_{\eps}(\s,\t)$
be such that 
$$
\bigg\|CE_{\bar\s}DE_{\t}FE_{\s}x-\sum_{\r\in I_{\eps}(\s,\t)}E_{\r}C
E_{\bar\s}DE_{\t}FE_{\s}x\bigg\|<\frac{\eps}
{{\displaystyle\sum_{\s\in I_{\eps}}|I_{\eps}(\s)|}}\,.
$$
In addition, by the mean ergodic theorem \eqref{jvn}, choose $N_{\eps}$ such that 
\begin{align*}
&\bigg\|\bigg(\frac{1}{N}\sum_{n=0}^{N-1}(\s U)^{n}-E_{\bar\s}\bigg)
DE_{\t}FE_{\s}x\bigg\|<\frac{\eps}{{\displaystyle\sum_{\s\in I_{\eps}}|I_{\eps}(\s)|}}\,,\\
&\bigg\|\bigg(\frac{1}{N}\sum_{n=0}^{N-1}(\t U)^{n}-E_{\bar\t}\bigg)
BE_{\r}CE_{\bar\s}DE_{\t}FE_{\s}x\bigg\|<\frac{\eps}{{\displaystyle\sum_{\s\in I_{\eps}}
\sum_{\t\in I_{\eps}(\s)}|I_{\eps}(\s,\t)|}}\,,\\
&\bigg\|\bigg(\frac{1}{N}\sum_{n=0}^{N-1}(\r U)^{n}-E_{\bar\r}\bigg)
AE_{\bar\t}BE_{\r}CE_{\bar\s}DE_{\t}FE_{\s}x\bigg\|<\frac{\eps}{{\displaystyle\sum_{\s\in I_{\eps}}
\sum_{\t\in I_{\eps}(\s)}|I_{\eps}(\s,\t)|}}\,,
\end{align*}
whenever $N>N_{\eps}$, $\s\in I_{\eps}$, $\t\in I_{\eps}(\s)$, and $\r\in I_{\eps}(\s,\t)$.  
Let $\b=\{1,2,1,2\}$, and $\g=\{1,1\}$.
We then obtain for each $N>N_{\eps}$,
\begin{align*}
&\bigg\|\frac{1}{N^{3}}\sum_{k,m,n=0}^{N-1}U^{k}AU^{m}BU^{k}CU^{n}DU^{m}FU^{n}x
-S_{\a;A,B,C,D,F}x\bigg\|\\
\leq&\bigg\|\frac{1}{N^{3}}\sum_{k,m,n=0}^{N-1}U^{k}AU^{m}BU^{k}CU^{n}DU^{m}FU^{n}
-S_{\a;A,B,C,D,F}\bigg\|\bigg\|x-\sum_{\s\in I_{\eps}}E_{\s}x\bigg\|\\
+&\bigg\|\sum_{\s\in I_{\eps}}
\bigg(\frac{1}{N^{3}}\sum_{k,m,n=0}^{N-1}U^{k}AU^{m}BU^{k}C(\s U)^{n}D U^{m}
-S_{\b;A,B,CE_{\bar\s}D}\bigg)FE_{\s}x\bigg\|\\
\end{align*}
\begin{align*}
\leq&2\eps+\bigg\|\frac{1}{N^{3}}\sum_{k,m,n=0}^{N-1}U^{k}AU^{m}BU^{k}C(\s U)^{n}DU^{m}
-S_{\b;A,B,CE_{\bar\s}D}\bigg\|\\
\times&\sum_{\s\in I_{\eps}}\bigg\|
FE_{\s}x-\sum_{\t\in I_{\eps}(\s)}E_{\t}FE_{\s}x\bigg\|\\
+&\bigg\|\sum_{\s\in I_{\eps}}
\sum_{\t\in I_{\eps}(\s)}
\bigg(\frac{1}{N^{3}}\sum_{k,m,n=0}^{N-1}U^{k}A(\t U)^{m}BU^{k}C(\s U)^{n}
-S_{\g;AE_{\bar\t}B}CE_{\bar\s}\bigg)DE_{\t}FE_{\s}x\bigg\|\\
\leq&4\eps+\bigg\|\sum_{\s\in I_{\eps}}\sum_{\t\in I_{\eps}(\s)}
\bigg[\frac{1}{N^{2}}\sum_{k,m=0}^{N-1}U^{k}A(\t U)^{m}BU^{k}C\bigg(\frac{1}{N}\sum_{n=0}^{N-1}
(\s U)^{n}-E_{\bar\s}\bigg)\bigg]DE_{\t}FE_{\s}x\bigg\|\\
+&\bigg\|\sum_{\s\in I_{\eps}}
\sum_{\t\in I_{\eps}(\s)}
\bigg(\frac{1}{N^{2}}\sum_{k,m=0}^{N-1}U^{k}A(\t U)^{m}BU^{k}
-S_{\g;AE_{\bar\t}B}\bigg)CE_{\bar\s}DE_{\t}FE_{\s}x\bigg\|\\
\leq&5\eps+\bigg\|\frac{1}{N^{2}}\sum_{k,m=0}^{N-1}U^{k}A(\t U)^{m}BU^{k}
-S_{\g;AE_{\bar\t}B}\bigg\|\\
\times&\bigg\|CE_{\bar\s}DE_{\t}FE_{\s}x-\sum_{\r\in I_{\eps}(\s,\t)}E_{\r}C
E_{\bar\s}DE_{\t}FE_{\s}x\bigg\|\\
+&\bigg\|\sum_{\s\in I_{\eps}}\sum_{\t\in I_{\eps}(\s)}\sum_{\r\in I_{\eps}(\s,\t)}
\bigg(\frac{1}{N^2}\sum_{k,m=0}^{N-1}(\r U)^{k}A(\t U)^{m}-E_{\bar\r}AE_{\bar\t}\bigg)
BE_{\r}C
E_{\bar\s}DE_{\t}FE_{\s}x\bigg\|\\
\leq&7\eps+\bigg\|\sum_{\s\in I_{\eps}}\sum_{\t\in I_{\eps}(\s)}\sum_{\r\in I_{\eps}(\s,\t)}
\bigg[\frac{1}{N}\sum_{k=0}^{N-1}(\r U)^{k}A\bigg(\frac{1}{N}
\sum_{m=0}^{N-1}(\t U)^{m}-E_{\bar\t}\bigg)\bigg]
BE_{\r}C
E_{\bar\s}DE_{\t}FE_{\s}x\bigg\|\\
+&\bigg\|\sum_{\s\in I_{\eps}}\sum_{\t\in I_{\eps}(\s)}\sum_{\r\in I_{\eps}(\s,\t)}
\bigg(\frac{1}{N}\sum_{n=0}^{N-1}(\r U)^{n}-E_{\bar\r}\bigg)
AE_{\bar\t}BE_{\r}CE_{\bar\s}DE_{\t}FE_{\s}x\bigg\|\leq9\eps\,.
\end{align*}

\section*{acknowledgements}

The author would like to thank F. Mukhamedov for some useful suggestions.


\begin{thebibliography}{9999}
    
\bibitem{AHO} Accardi L., Hashimoto U., Obata N. 
{\it Notions of independence related to the free group},
Infin. Dimens. Anal. Quantum Probab. 
Relat. Top. {\bf 1} (1998), 201--220.

\bibitem{F1} Fidaleo F. 
{\it On the entangled ergodic theorem},
Infin. Dimens. Anal. Quantum Probab. 
Relat. Top. {\bf 10} (2007), 67--77.

\bibitem{F2} Fidaleo F. 
{\it An ergodic theorem for 
quantum diagonal measures},
Infin. Dimens. Anal. Quantum Probab. 
Relat. Top. {\bf 12} (2009), 307--320.

\bibitem{F3} Fidaleo F. 
{\it New topics in ergodic theory},
Atti Semin. Mat. Fis. Univ. Modena e Reggio Emilia {\bf 55} (2007), 61--79.

\bibitem{Fu} Furstenberg H. 
{\it Recurrence in ergodic theory and combinatorial number theory},
Princeton University Press, New Jersey 1981.

\bibitem{L} Liebscher V. 
{\it Note on entangled ergodic theorems},
Infin. Dimens. Anal. Quantum Probab. 
Relat. Top. {\bf 2} (1999), 301--304.

\bibitem{NSZ} Niculescu C. P., Str\"oh A., Zsid\'o L.
{\it Noncommutative estension of classical and multiple recurrence 
theorems}, J. Operator Theory {\bf 50} (2003), 3--52.


\bibitem{T} Takesaki M.
{\it Theory of operator algebras I}, Springer,
Berlin--Heidelberg--New York 1979.




\end{thebibliography}
\end{document}